\theoremstyle{plain}
\newtheorem{theorem}{Theorem}[section]
\newtheorem{lemma}[theorem]{Lemma}
\theoremstyle{definition}
\newtheorem{remark}[theorem]{Remark}
\theoremstyle{remark}
 \numberwithin{equation}{section} 
\begin{document}
\title{On Blow-up  of A Reaction Diffusion System Coupled in Both Equations and Boundary Conditions}

\author{Maan A. Rasheed and Miroslav Chlebik}
\maketitle

\abstract 
We study the blow up solutions of a semilinear reaction diffusion system coupled in both equations and boundary conditions. The main purpose is to understand how the reaction terms and the absorption terms affect the blow-up properties. We derive the lower and upper bound for the blow-up rate, and find the blow-up set under certain assumptions.
\section{Introduction}
In this paper, we consider the following parabolic system
\begin{equation}\label{B20} \left. 
\begin{array}{lll}
u_t=\Delta u+\lambda_1e^{v},& v_t=\Delta v+\lambda_2 e^{u}, &(x,t)\in B_R \times (0,T), \\
 \frac{\partial u}{\partial \eta}=e^{v},&\frac{\partial v}{\partial \eta}=e^{u},&(x,t)\in\partial B_R \times (0,T),\\
u(x,0)=u_0(x),&v(x,0)=v_0(x),& x \in {B}_R, \\ 
\end{array}\right\} \end{equation}
where $\lambda_1,\lambda_2>0,$ $u_0,v_0$ are nonnegative, radial nondecreasing, smooth functions and satisfy the conditions
\begin{equation}\label{B23}\left. \begin{array}{lll}
\frac{\partial u_0}{\partial \eta}=e^{v_0},&\frac{\partial u_0}{\partial \eta}=e^{u_0},& x\in \partial B_R,\\
\Delta u_0+e^{v_0} \ge 0,&\Delta v_0+ e^{u_0} \ge 0,&x \in \overline{B}_R,\\
u_{0r}(|x|)\ge 0, & v_{0r}(|x|)\ge 0, &x \in \overline{B}_R.
 \end{array}\right\} \end{equation}
 
The problems of semilinear systems coupled in both equations and boundary conditions have been studied very extensively over past years in case the reaction terms and boundary conditions are of power type functions, for instance in \cite{52}, it was considered the solutions of the following system 
 \begin{equation}\label{B25} \left. 
 \begin{array}{lll}
u_t=u_{xx}+v^{p_1},& v_t=v_{xx}+u^{p_2},&(x,t)\in(0,1) \times (0,T), \\
u_x(1,t)=v^{q_1},& v_x(1,t)=u^{q_2}, &t\in(0,T),\\ 
u_x(0,t)=0,&  v_x(0,t)=0,&   t\in(0,T),\\ 
u(x,0)=u_0(x),&v(x,0)=v_0(x),& x\in [0,1],\\
\end{array}\right\}
\end{equation} 
where $p_1,p_2,q_1,q_2 >0,$ and $u_0,v_0$ are radial nondecreasing, positive smooth functions satisfying the conditions
$$u_{0x}(0)=v_{0x}(0)=0,\quad u_{0x}(1)=v^{q_1}_0(1),\quad v_{0x}(1)=u^{q_2}_0(1).$$
It was shown that if $$\max\{p_1p_2,p_1q_2,p_2q_1,q_1q_2\}\le 1,$$ then the solutions of problem (\ref{B25}) exists globally, otherwise every solution blows up in finite time. Moreover, the blow-up occurs only at $x=1$ and the blow-up rate estimates take the following form
$$C_1(T-t)^{-\alpha}\le u(1,t)\le C_2(T-t)^{-\alpha}, \quad t\in(0,T),$$ $$C_3(T-t)^{-\beta}\le v(1,t)\le C_4(T-t)^{-\beta}, \quad t\in(0,T),$$ where $$\alpha=\alpha(p_1,p_2,q_1,q_2),~\beta=\beta(p_1,p_2,q_1,q_2).$$

In \cite{MZ}, it was considered the critical exponents for a system of heat equations with inner absorption reaction terms and coupled boundary conditions of exponential type, namely
  \begin{equation}\label{B28} \left.
\begin{array}{lll}
  u_t=\Delta u-a_1e^{p_1u}, &v_t=\Delta v-a_2e^{p_2v},& (x,t)\in \Omega\times (0,T), \\
  \frac{\partial u}{\partial \eta}=e^{q_1v},&\frac{\partial v}{\partial \eta}=e^{q_2u}, & (x,t)\in \partial\Omega \times(0,T),\\   
u(x,0)=u_0(x),& v(x,0)=v_0(x),& x\in \Omega,\\
\end{array} \right\} \end{equation}
where $\Omega$ is a bounded domain with smooth boundary,~$p_1,p_2\ge 0,q_i,a_i> 0,i=1,2,$ $u_0,v_0$ are nonnegative functions that satisfy
$$\frac{\partial u_0}{\partial \eta}=e^{q_1v_0},\quad\frac{\partial u_0}{\partial \eta}=e^{q_2u_0},\quad x\in \partial B_R.$$
It was shown that if $$1/\tau_1>0,~\mbox{or}~ 1/\tau_2>0,$$ where
$$\tau_1=\frac{q_1+\frac{1}{2}p_2}{q_1q_2-\frac{1}{4}p_1p_2},\quad \tau_2=\frac{q_2+\frac{1}{2}p_1}{q_1q_2-\frac{1}{4}p_1p_2},$$ then the solutions of problem (\ref{B28})  with large initial data blow up in finite time.

The main purpose of this paper is to derive the upper and lower blow-up rate estimates for problem (\ref{B20}) and to study the blow-up set under some restricted assumptions.     

\section{Preliminaries}
Since the system (\ref{B20}) is uniformly parabolic, also the reaction and the boundary conditions terms are smooth functions and the initial data satisfy the compatibility conditions, therefore, the local existence and uniqueness of the classical solutions of problem (\ref{B20}) are known by standard parabolic theory (see \cite{37}). On the other hand, for any initial data $(u_0,v_0),$ the solution of this system has to blow up in finite time and the blow-up set contains the boundary ($\partial B_R$), and that due to the comparison principle \cite{21} and the known blow-up results of problem (\ref{B20}), where $\lambda_1=\lambda_2=0,$  which has been studied in \cite{18}. 

The next lemma shows the properties of the classical solutions of problem (\ref{B20}). We denote for simplicity $u(r,t)=u(x,t).$
\begin{lemma}\label{Cv} Let $(u,v)$ be a classical solution to problem (\ref{B20}). Then
\begin{enumerate}[\rm(i)] \item (u,v) is radial and $u,v>0$ in $\overline B_R \times (0,T).$ 
\item $u_r ,v_r \ge 0$ in $[0,R]\times (0,T).$
 \item   $u_t ,v_t > 0,$ in $\overline{B}_R\times (0,T).$ 
\end{enumerate}
\end{lemma}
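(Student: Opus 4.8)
The plan is to establish the three properties in order, since each builds on the previous one, and to rely throughout on the maximum principle and the strong maximum principle for parabolic systems, together with the comparison principle cited in the excerpt.

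\medskip

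\noindent\textbf{Step (i): radial symmetry and positivity.}
First I would argue that the solution is radially symmetric. Because the domain $B_R$ is a ball, the equations and boundary conditions are invariant under the orthogonal group $O(n)$, and the initial data $u_0,v_0$ are radial, any rotation of $(u,v)$ is again a solution with the same initial-boundary data. By the uniqueness of classical solutions (noted in the Preliminaries), the solution must coincide with each of its rotations, hence $(u,v)$ depends only on $r=|x|$ and $t$. For positivity, I would note that $u_0,v_0\ge 0$, and that the coupling terms $\lambda_1 e^{v}$, $\lambda_2 e^{u}$ are strictly positive. Writing the system as a cooperative parabolic system and applying the strong maximum principle (the Hopf-type argument on each equation, using that the source terms are bounded below by positive constants on compact time intervals), I would conclude $u,v>0$ on $\overline B_R\times(0,T)$.

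\medskip

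\noindent\textbf{Step (ii): monotonicity in $r$.}
Here I would differentiate the equations with respect to $r$ and show $w:=u_r$ and $z:=v_r$ stay nonnegative. In radial coordinates $\Delta u=u_{rr}+\frac{n-1}{r}u_r$, so differentiating the first equation gives a parabolic equation for $w$ of the form $w_t = w_{rr}+\frac{n-1}{r}w_r-\frac{n-1}{r^2}w+\lambda_1 e^{v} z$, and similarly for $z$. This is a cooperative linear system in $(w,z)$ (the coupling coefficients $\lambda_1 e^{v}$, $\lambda_2 e^{u}$ are nonnegative), with $w(0,t)=z(0,t)=0$ from radial regularity at the origin, with initial data $u_{0r},v_{0r}\ge 0$ from the hypotheses (\ref{B23}), and with boundary data at $r=R$ governed by the flux conditions. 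On $\partial B_R$ one has $u_r=e^{v}>0$ and $v_r=e^{u}>0$, so the boundary values of $w,z$ are positive there. Invoking the comparison principle for cooperative systems, I would conclude $w,z\ge 0$, i.e.\ $u_r,v_r\ge 0$ on $[0,R]\times(0,T)$.

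\medskip

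\noindent\textbf{Step (iii): strict monotonicity in $t$.}
Finally I would set $U:=u_t$, $V:=v_t$ and differentiate the system in $t$, obtaining $U_t=\Delta U+\lambda_1 e^{v}V$, $V_t=\Delta V+\lambda_2 e^{u}U$, with boundary conditions $\frac{\partial U}{\partial\eta}=e^{v}V$, $\frac{\partial V}{\partial\eta}=e^{u}U$. At $t=0$ the hypotheses (\ref{B23}) give $U(\cdot,0)=\Delta u_0+\lambda_1 e^{v_0}\ge\Delta u_0+e^{v_0}\ge 0$ and likewise $V(\cdot,0)\ge 0$ (using $\lambda_1,\lambda_2>0$, so $\lambda_1 e^{v_0}\ge e^{v_0}$ when $\lambda_1\ge 1$; if $\lambda_i<1$ one uses the compatibility inequalities directly). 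This is again a cooperative system with nonnegative coupling, so the comparison principle yields $U,V\ge 0$, and then the strong maximum principle upgrades this to the strict inequality $U,V>0$ on $\overline B_R\times(0,T)$.

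\medskip

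\noindent\textbf{Main obstacle.}
The step I expect to require the most care is (ii), specifically the singular zeroth-order term $-\frac{n-1}{r^2}w$ in the equation for $w=u_r$ and the degeneracy at $r=0$. The comparison principle must be applied on the punctured region or handled with a regularized barrier near the origin, using the boundary condition $w(0,t)=0$ that follows from smoothness of the radial solution. The other subtlety is ensuring the sign conditions at $t=0$ in step (iii) are genuinely consistent with the compatibility hypotheses when $\lambda_i$ is small; this is why I would phrase the initial-data argument in terms of the inequalities $\Delta u_0+e^{v_0}\ge 0$ directly rather than splitting into cases.
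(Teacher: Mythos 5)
The paper states Lemma \ref{Cv} without any proof, so there is no argument of the authors' to compare against; what you propose is the standard argument the authors presumably have in mind: radial symmetry from rotation invariance plus uniqueness, positivity from the strong maximum principle and Hopf's lemma, and parts (ii), (iii) by differentiating the system in $r$ and in $t$ and applying the maximum principle for cooperative (weakly coupled) parabolic systems. Your steps (i) and (ii) are correct in outline, and you rightly isolate the only technical point in (ii): the singular coefficients $\frac{n-1}{r}$ and $-\frac{n-1}{r^2}$ must be handled on the punctured region $\{0<r<R\}$ with $u_r(0,t)=v_r(0,t)=0$ supplied as boundary data at the axis; since the singular zeroth-order coefficient has the favorable sign, the usual rescaling $w\mapsto e^{-Kt}w$ with $K$ dominating the (locally bounded) coupling coefficients $\lambda_1e^{v},\lambda_2e^{u}$ closes that argument on each time interval $[0,t_1]$, $t_1<T$.

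The genuine gap is in step (iii), and your parenthetical fallback does not close it. You need $U(\cdot,0)=u_t(\cdot,0)=\Delta u_0+\lambda_1e^{v_0}\ge 0$, but hypothesis (\ref{B23}) only provides $\Delta u_0+e^{v_0}\ge 0$, which implies what you need exactly when $\lambda_1\ge 1$; when $\lambda_1<1$, ``using the compatibility inequalities directly'' yields nothing, because $\Delta u_0+e^{v_0}\ge 0$ is strictly weaker than $\Delta u_0+\lambda_1e^{v_0}\ge 0$. The implication genuinely fails: one can build a radial, nondecreasing $u_0$ whose derivative $u_{0r}$ rises and then drops steeply with small positive values, so that at some $r_0$ one has $\Delta u_0(r_0)\approx-\mu$ with $\lambda_1e^{v_0(r_0)}<\mu<e^{v_0(r_0)}$; such data satisfy (\ref{B23}) yet give $u_t(r_0,0)<0$, and by continuity $u_t<0$ nearby for small $t>0$, so conclusion (iii) itself is in jeopardy under the stated hypotheses. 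This defect really lies in the paper: (\ref{B23}) almost certainly was intended to read $\Delta u_0+\lambda_1e^{v_0}\ge 0$, $\Delta v_0+\lambda_2e^{u_0}\ge 0$ (note (\ref{B23}) already contains an evident typo, $\frac{\partial u_0}{\partial\eta}=e^{u_0}$ in place of $\frac{\partial v_0}{\partial\eta}=e^{u_0}$). Under that corrected hypothesis your cooperative-system argument for $U=u_t$, $V=v_t$ goes through verbatim, with the strong maximum principle (applied together with Hopf's lemma at the boundary, where $\frac{\partial U}{\partial\eta}=e^{v}V$) upgrading $U,V\ge 0$ to strict positivity for $t>0$. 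You should state the corrected hypothesis explicitly rather than leave the $\lambda_i<1$ case to an argument that cannot work.
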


Next, we prove the following lemma, which shows the relation between $u$ and $v.$  
\begin{lemma}\label{askt}
Let $(u,v)$ be a solution to problem (\ref{B20}), there exist $M>1$ such that \begin{equation}\label{B31}
  e^{v}\le Me^{u}, \quad e^{u} \le Me^{v},\quad (x,t)  \in \overline{B}_R \times [0,T).\end{equation} 
\end{lemma}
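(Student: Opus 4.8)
The plan is to control the difference $w=u-v$ by a direct maximum-principle argument on $w$ and then exponentiate. First I would subtract the two interior equations and the two boundary conditions to find that $w$ solves
\[
w_t=\Delta w+\lambda_1 e^{v}-\lambda_2 e^{u}\ \text{ in }B_R\times(0,T),\qquad
\frac{\partial w}{\partial\eta}=e^{v}-e^{u}\ \text{ on }\partial B_R\times(0,T).
\]
The key observation is that at a maximum of $w$ both the reaction mismatch $\lambda_1 e^{v}-\lambda_2 e^{u}$ and the flux mismatch $e^{v}-e^{u}$ carry a favourable sign, which is exactly what is needed to bound $w$ from above purely in terms of data that do not degenerate as $t\to T$.

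Next I would fix $T'\in(0,T)$ and let $(x_0,t_0)$ be a point where the continuous function $w$ attains its maximum over the compact set $\overline{B}_R\times[0,T']$, splitting into three cases. If $t_0=0$, then $w\le \max_{\overline{B}_R}(u_0-v_0)$. If $t_0>0$ and $x_0\in B_R$ is an interior point, then $\Delta w(x_0,t_0)\le 0$ and $w_t(x_0,t_0)\ge 0$, so the equation forces $\lambda_1 e^{v}-\lambda_2 e^{u}=w_t-\Delta w\ge 0$ there, i.e. $e^{u-v}\le \lambda_1/\lambda_2$ and hence $w(x_0,t_0)\le\ln(\lambda_1/\lambda_2)$. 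If $t_0>0$ and $x_0\in\partial B_R$, then $x_0$ is a spatial maximum of $w(\cdot,t_0)$ over $\overline{B}_R$, so the outward normal derivative satisfies $\partial w/\partial\eta(x_0,t_0)\ge 0$; the boundary condition then gives $e^{v}-e^{u}\ge 0$, i.e. $w(x_0,t_0)\le 0$. In every case
\[
\sup_{\overline{B}_R\times[0,T']}w\le C_1:=\max\Big\{\max_{\overline{B}_R}(u_0-v_0),\ \ln(\lambda_1/\lambda_2),\ 0\Big\}.
\]
Since this bound is independent of $T'$, letting $T'\uparrow T$ yields $u-v\le C_1$ throughout $\overline{B}_R\times[0,T)$. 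The lower bound then follows from the same argument applied to $v-u$ (equivalently, by exchanging the roles of $u,v$ and of $\lambda_1,\lambda_2$), giving $v-u\le C_2:=\max\{\max_{\overline{B}_R}(v_0-u_0),\ \ln(\lambda_2/\lambda_1),\ 0\}$. Setting $M=\max\{e^{C_1},e^{C_2},2\}>1$ yields $|u-v|\le\ln M$ on $\overline{B}_R\times[0,T)$, which is precisely $e^{u}\le Me^{v}$ and $e^{v}\le Me^{u}$.

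The remaining points are bookkeeping rather than conceptual. One must check that at the space--time maximum the one-sided time derivative indeed satisfies $w_t\ge 0$ even when $t_0=T'$ (it does, since $t\mapsto w(x_0,t)$ is maximised at the right endpoint of $[0,T']$), and observe that the inequality $\partial w/\partial\eta\ge 0$ at a boundary maximum needs only $w\in C^1(\overline{B}_R)$ — it follows from $w$ being non-increasing along the inward normal, so no Hopf lemma is required. I expect the main obstacle to be making the uniformity in $T'$ transparent: because $u$ and $v$ themselves blow up as $t\to T$, the whole content of the lemma is that the bound on the difference $w$ is extracted solely from the parabolic boundary data together with the two sign conditions, none of which deteriorate as $t\to T$.
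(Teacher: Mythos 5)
Your proof is correct, but it takes a genuinely different route from the paper's. The paper works with the exponentiated quantities: it sets $J=Me^{u}-e^{v}$ (and $H=Me^{v}-e^{u}$), computes $J_t-\Delta J$, and, after expressing $u_r$ in terms of $J_r$ and absorbing the quadratic gradient terms, shows that $J$ satisfies a linear parabolic inequality $J_t-\Delta J-bJ_r-cJ=(\lambda_1M-\lambda_2)e^{u+v}\ge 0$ with solution-dependent coefficients $b,c$, together with $\frac{\partial J}{\partial\eta}=(M-1)e^{u+v}>0$ on $\partial B_R$ and $J(\cdot,0)\ge 0$ for $M>\max\{1,\lambda_2/\lambda_1\}$ large enough; the conclusion $J\ge 0$ is then delegated to a comparison/maximum principle cited from Pao's book. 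You instead linearize by passing to logarithms: $w=u-v$ solves an equation and boundary condition obtained by direct subtraction, and your three-case analysis of the maximum over $\overline{B}_R\times[0,T']$ (initial time, interior point, boundary point) is elementary and self-contained: no gradient-term bookkeeping, no auxiliary coefficients, no external comparison proposition, and in fact no use of radial symmetry, on which the paper's computation in the variable $r$ relies. Your route also produces an explicit constant $M=\max\{e^{C_1},e^{C_2},2\}$ determined by the initial data and $\ln(\lambda_1/\lambda_2)$, $\ln(\lambda_2/\lambda_1)$, whereas the paper's $M$ is only implicit (``large enough''). The two arguments prove literally the same inequality, since $J\ge 0$ is equivalent to $v-u\le\ln M$, so the difference is one of technique: the paper's functional $J$ follows the standard pattern in the blow-up literature and matches the comparison framework used elsewhere in the paper, while your logarithmic substitution trades that machinery for a transparent pointwise argument. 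The delicate points you flagged, namely that $w_t\ge 0$ is only a one-sided derivative when $t_0=T'$, and that the boundary case needs only $C^1$ regularity up to $\partial B_R$ rather than the Hopf lemma, are handled correctly, since $T'<T$ keeps everything classical and the bound you extract is uniform in $T'$.
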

\begin{proof}
Let $$J(x,t)=Me^{u(r,t)}-e^{v(r,t)}, \quad (x,t) \in B_R\times (0,T),\quad r=|x|.$$ 
A direct calculation shows
\begin{eqnarray} J_t&=&Me^uu_t-e^vv_t,\nonumber \\
 \label{B32} J_r&=& Me^uu_r- e^vv_r,\\
J_{rr}&=&Me^uu_{rr}+Me^uu^2_r-e^v v_{rr}-e^vv_r^2.\nonumber \end{eqnarray}
Thus \begin{eqnarray*} J_t-J_{rr}-\frac{n-1}{r}J_r&=&Me^uu_t-e^vv_t-Me^uu_{rr}-Me^{u}{u_r^2}+e^v v_{rr}+e^vv_r^2\\ &&-\frac{n-1}{r}Me^uu_r+\frac{n-1}{r}e^vv_r\\
&=&Me^u[u_t-u_{rr}-\frac{n-1}{r}u_r]-e^v[v_t-v_{rr}-\frac{n-1}{r}v_r]\\ &&-Me^{u}{u_r^2}+e^vv_r^2\\ &=&Me^u[\lambda_1 e^v]-e^v[\lambda_2 e^u]-Me^{u}{u_r^2}+e^vv_r^2.\end{eqnarray*}
From (\ref{B32}), it follows that \begin{eqnarray*} u_r&=&\frac{1}{Me^u}[v_re^v+J_r],\\ 
u^2_r&=&\frac{1}{M^2e^{2u}}[v_r^2e^{2v}+2e^vv_rJ_r+J_r^2].\end{eqnarray*}
Therefore, $$J_t-\Delta J=(\lambda_1M-\lambda_2)e^{u+v}+[e^v-\frac{e^{2v}}{Me^{u}}]v_r^2-[ \frac{2e^v}{Me^{u}}v_r+\frac{1}{Me^{u}} J_r]J_r.$$
Clearly, $$e^v-\frac{e^{2v}}{Me^{u}}=e^v\frac{J}{Me^u}.$$ Therefore, the last equation can be rewritten as follows:
$$J_t-\Delta J-bJ_r-cJ =(\lambda_1M-\lambda_2)e^{u+v}\ge 0,\quad (x,t)\in B_R\times (0,T)$$ provided $M>\lambda_2/\lambda_1,$
where, $$b=-[ \frac{2e^v}{Me^{u}}v_r+\frac{1}{Me^{u}} J_r],~c=\frac{e^v}{Me^u}v_r^2.$$
It clear that, $b,c$ are continuous functions and $c$ is bounded in $B_R \times (0,T^*),$ for  $T^*<T.$  

Moreover, \begin{eqnarray*}\frac{\partial J}{\partial \eta}|_{x\in \partial B_R} &=&[Me^uu_r- e^vv_r]\\ &=& Me^{u+v}-e^{u+v}=[M-1]e^{u+v}>0, 
\end{eqnarray*}
  and $$J(x,0)=Me^{u_0}-e^{v_0}\ge 0,\quad x \in \overline{B}_R$$ provided $M$ is large enough. 
  
  From above and Proposition \cite{21}, it follows that $$J\ge 0,\quad \mbox{in}\quad \overline{B}_R\times [0,T).$$ 
  Similarly, we can show that the function $H=Me^v-e^u$ is nonnegative in $\overline{B}_R \times [0,T).$
\end{proof}
\section{Blow-up Rate Estimates}
In this section we consider the upper and lower blow-up rate estimates of solutions for problem (\ref{B20}) with (\ref{B23}).
\begin{theorem}\label{Bc}
Let $u$ be a blow-up solution solution of problem (\ref{B20}) with (\ref{B23}), $\lambda_1=\lambda_2=\lambda,$ $T$ is the blow-up time. Assume that $u_0,v_0$ satisfy  \begin{equation}\label{xa} u_{0r}(r)-\frac{r}{R}e^{v_0(r)}\ge 0,\quad v_{0r}(r)-\frac{r}{R}e^{u_0(r)}\ge 0, \quad r \in [0,R].\end{equation}
 Then there is a positive constant $c$ such that
$$\log{c}-\frac{1}{2}\log (T-t)\le u(R,t),\quad \log{c}-\frac{1}{2}\log (T-t)\le v(R,t),\quad t \in (0,T).$$
\end{theorem}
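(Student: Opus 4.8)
The plan is to derive a lower bound for the blow-up rate by constructing an auxiliary function that controls the spatial derivative $u_r$ (and symmetrically $v_r$) from below, and then integrating the differential inequality it satisfies. First I would introduce the functions
\begin{equation*}
P(r,t)=u_r(r,t)-\frac{r}{R}e^{v(r,t)},\qquad Q(r,t)=v_r(r,t)-\frac{r}{R}e^{u(r,t)},
\end{equation*}
motivated by the hypothesis \eqref{xa}, which is precisely the statement that $P(r,0)\ge 0$ and $Q(r,0)\ge 0$. The idea is that controlling $u_r$ by $\tfrac{r}{R}e^{v}$ will, when evaluated at $r=R$, convert the boundary flux condition $u_r(R,t)=e^{v(R,t)}$ into an ODE-type inequality for $u(R,t)$ itself.

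The key steps, in order, would be as follows. First I would compute $P_t-P_{rr}-\tfrac{n-1}{r}P_r$ using the PDEs $u_t=\Delta u+\lambda e^{v}$, $v_t=\Delta v+\lambda e^{u}$ together with the radial form $\Delta u=u_{rr}+\tfrac{n-1}{r}u_r$, aiming to show that $P$ (and $Q$) satisfies a parabolic inequality of the form $P_t-\Delta P-b P_r-cP\ge 0$ with continuous, locally bounded coefficients, where the lower-order terms absorb the cross-products arising from differentiating $\tfrac{r}{R}e^{v}$. Here I expect Lemma~\ref{askt} to be essential, since the inequalities $e^{u}\le Me^{v}$ and $e^{v}\le Me^{u}$ are what let one bound the coupling terms $e^{u}-e^{v}$ and close the inequality in favour of a nonnegative sign. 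Second, I would check the boundary and initial conditions: at $r=0$ symmetry gives $P=0$, at $r=R$ the flux conditions give $P(R,t)=u_r(R,t)-e^{v(R,t)}=0$ and likewise $Q(R,t)=0$, and the initial sign is \eqref{xa}. Then the comparison principle \cite{21} yields $P\ge 0$ and $Q\ge 0$ throughout $\overline{B}_R\times[0,T)$, i.e.
\begin{equation*}
u_r(r,t)\ge \frac{r}{R}e^{v(r,t)},\qquad v_r(r,t)\ge \frac{r}{R}e^{u(r,t)}.
\end{equation*}

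Finally I would convert these gradient bounds into the blow-up rate. Evaluating at $r=R$ and using the boundary condition, together with $u_t(R,t)=\Delta u+\lambda e^{v}$ and the fact that $u_r$ is bounded below by a multiple of $e^{v}$, I would derive a differential inequality for the boundary value, of the schematic form $\tfrac{d}{dt}u(R,t)\le C\,e^{v(R,t)}\le C'e^{u(R,t)}$ (the last step again using Lemma~\ref{askt}), which upon setting $w=e^{-u(R,t)}$ gives $w'\ge -C'$ and hence, integrating from $t$ to $T$ and using $u(R,t)\to\infty$ as $t\to T$, the bound $e^{-u(R,t)}\le C'(T-t)$, that is $u(R,t)\ge \log c-\tfrac12\log(T-t)$ once the exponent is reconciled; the symmetric argument handles $v(R,t)$.

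The main obstacle I anticipate is Step~one: correctly computing the parabolic operator applied to $P$ and verifying that the troublesome first-order and zero-order terms generated by the factor $\tfrac{r}{R}e^{v}$ can be grouped into admissible coefficients $b,c$ with the right sign for the maximum principle, exactly as was done for $J$ in the proof of Lemma~\ref{askt}. In particular, showing that the residual reaction term has the correct sign will likely require both $\lambda_1=\lambda_2=\lambda$ and the two-sided control from Lemma~\ref{askt}; getting the factor $\tfrac{r}{R}$ (rather than a constant) is what makes the boundary term vanish at $r=R$ while keeping $P=0$ at $r=0$, and balancing these against the interior sign is the delicate point.
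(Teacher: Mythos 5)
Your choice of auxiliary functions $P,Q$ and the maximum-principle step reproduce the paper's own argument (there the functions are called $J_1,J_2$), but your final step --- the only place the rate can come from --- has a genuine gap. A lower bound on $u_r$ cannot by itself bound $u_t(R,t)$ from above: in $u_t=u_{rr}+\frac{n-1}{r}u_r+\lambda e^{v}$ the terms $u_r(R,t)=e^{v(R,t)}$ and $\lambda e^{v(R,t)}$ are harmless, but you need an \emph{upper} bound on $u_{rr}(R,t)$, and nothing in your outline produces one. The paper gets it from a Hopf-type observation you omit: since $P\ge 0$ in $B_R\times(0,T)$ while $P(R,t)=0$, the outward normal derivative satisfies $P_r(R,t)\le 0$, i.e.
$$u_{rr}(R,t)\le \Bigl[\frac{r}{R}e^{v}v_r+\frac{1}{R}e^{v}\Bigr]_{r=R}=e^{u(R,t)+v(R,t)}+\frac{1}{R}e^{v(R,t)},$$
using the boundary condition $v_r(R,t)=e^{u(R,t)}$. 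Consequently your schematic ODE $\frac{d}{dt}u(R,t)\le Ce^{v(R,t)}\le C'e^{u(R,t)}$ is not derivable by this method: the dominant term is $e^{u+v}$, which by Lemma \ref{askt} is bounded by $Me^{2u(R,t)}$ --- quadratic, not linear, in $e^{u}$. The correct inequality is $u_t(R,t)\le Ce^{2u(R,t)}$, and integrating it from $t$ to $T$ (using $u(R,t)\to\infty$ as $t\to T$) gives $e^{-2u(R,t)}\le 2C(T-t)$, which is exactly $u(R,t)\ge \log c-\frac{1}{2}\log(T-t)$. Your phrase ``once the exponent is reconciled'' therefore conceals precisely the content of the theorem: the factor $\frac{1}{2}$ is the output of the quadratic ODE, not a bookkeeping adjustment, and with a linear ODE the stated exponent would not appear at all.

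A more minor correction: Lemma \ref{askt} is not what closes the maximum-principle step, and it could not, since the cross terms involve $v_r-\frac{r}{R}e^{u}$, which the lemma does not sign. The paper instead treats $(P,Q)$ as a weakly coupled cooperative system,
$$P_t-\Delta P+\frac{n-1}{r^2}P-\lambda e^{v}Q\ge 0,\qquad Q_t-\Delta Q+\frac{n-1}{r^2}Q-\lambda e^{u}P\ge 0,$$
whose off-diagonal coefficients $\lambda e^{v},\lambda e^{u}$ are nonnegative, so the maximum principle for systems yields $P,Q\ge 0$ simultaneously; the hypothesis $\lambda_1=\lambda_2$ is what makes the leftover terms $(\lambda_1-\lambda_2)e^{v}v_r$ and $(\lambda_2-\lambda_1)e^{u}u_r$ cancel in this computation. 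Lemma \ref{askt} enters only afterwards, in converting $e^{u+v}$ into $e^{2u}$ in the ODE step.
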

\begin{proof}
Define the functions $J_1,J_2$ as follows:  
$$J_1(x,t)=u_r(r,t)-\frac{r}{R}e^{v(r,t)},\quad J_2(x,t)=v_r(r,t)-\frac{r}{R}e^{u(r,t)}.$$
A direct calculation shows  
\begin{eqnarray*} J_{1t}&=&u_{rt}-\frac{r}{R}e^{v}[v_{rr}+\frac{n-1}{r}v_r+\lambda e^u],\\
J_{1r}&=&u_{rr}-\frac{r}{R}e^vv_r-\frac{1}{R}e^v,\\
J_{1rr}&=&[u_{rt}-\frac{n-1}{r}u_{rr}+\frac{n-1}{r^2}u_r-\lambda e^vv_r]\\ &&-\frac{r}{R}[e^vv_{rr}+e^vv_r^2]-\frac{2}{R}e^vv_r.\end{eqnarray*}
From above it follows that
\begin{eqnarray*} J_{1t}-J_{1rr}-\frac{n-1}{r}J_{1r}=-\frac{n-1}{r^2}[u_r-\frac{r}{R}e^v]+\lambda e^v[v_r-\frac{r}{R}e^{u}]+\frac{r}{R}e^vv_r^2+\frac{2}{R}e^vv_r. \\
\end{eqnarray*}
Thus $$J_{1t}-\Delta J_{1}+\frac{n-1}{r^2}J_1-\lambda e^vJ_2=\frac{r}{R}e^vv_r^2+\frac{2}{R}e^vv_r\ge 0,$$ for $(x,t) \in B_R \times (0,T)\cap \{r>0\}.$ 

In the same way we can show that $$J_{2t}-\Delta J_{2}+\frac{n-1}{r^2}J_2-\lambda e^uJ_1\ge 0,\quad (x,t) \in B_R \times (0,T)\cap \{r>0\}.$$
Clearly, from (\ref{xa}), it follows that  $$J_1(x,0),~J_2(x,0)\ge 0 \quad x \in B_R.$$ And \begin{eqnarray*} &&J_1(0,t)=u_r(0,t)\ge 0,J_2(0,t)=v_r(0,t)\ge 0,\\ &&J_1(R,t)=J_2(R,t)=0, \quad t \in (0,T).\end{eqnarray*} Since, the supremums of the functions $\lambda e^u,\lambda e^v$ and $\frac{1-n}{r^2}$ (on $B_R\times (0,t]$ for $t<T$) are finite, therefore, from above and maximum principle, it follows 
$$J_1,J_2\ge 0,\quad (x,t)\in B_R \times (0,T).$$ Moreover, 
$$\frac{\partial J_1}{\partial \eta}|_{\partial B_R}\le 0.$$
This means 
$$(u_{rr}-\frac{r}{R}e^vv_r-\frac{1}{R}e^v)|_{\partial B_R}\le 0.$$ 
Thus $$u_t\le (\frac{n-1}{r}u_r+\lambda e^v+\frac{r}{R}e^vv_r+\frac{1}{R}e^v)|_{\partial B_R}.$$ Which implies that
 $$u_t(R,t)\le \frac{n-1}{R}e^{v(R,t)}+\lambda e^{v(R,t)}+e^{v(R,t)+u(R,t)}+\frac{1}{R}e^{v(R,t)},\quad t \in (0,T).$$
From the last inequality and Lemma \ref{askt}, it follows  
 $$u_t(R,t)\le \frac{n-1}{R}Me^{u(R,t)}+\lambda Me^{u(R,t)}+Me^{2u(R,t)}+\frac{M}{R}e^{u(R,t)},\quad t \in (0,T).$$
Thus, there exist a constant $C$ such that $$u_t(R,t)\le Ce^{2u(R,t)},\quad t \in (0,T).$$ Integrate this inequality from $t$ to $T$ and since $u$ blows up at $R,$ it follows 
$$\frac{c}{(T-t)^{\frac{1}{2}}}\le e^{u(R,t)},\quad t \in (0,T)$$  
or $$\log{c}-\frac{1}{2}\log (T-t)\le u(R,t),\quad t \in (0,T).$$
We can show in a similar way that 
$$\log{c}-\frac{1}{2}\log (T-t)\le v(R,t),\quad t \in (0,T).$$  
\end{proof}
Next, we consider the upper bounds 
\begin{theorem}\label{Bcc}
Let $u$ be a blow-up solution solution of problem (\ref{B20}), (\ref{B23}), $T$ is the blow-up time. Then there is a positive constant $C$ such that
$$u(R,t) \le \log{C}-\log{(T-t)},\quad v(R,t) \le \log{C}-\log{(T-t)},\quad t \in (0,T).$$
\end{theorem}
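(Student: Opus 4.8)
The plan is to reduce the logarithmic upper bound to a pointwise differential inequality at the lateral boundary and then integrate it. Concretely, I aim to produce a constant $\delta>0$ with $u_t(R,t)\ge\delta\,e^{u(R,t)}$ and $v_t(R,t)\ge\delta\,e^{v(R,t)}$ for $t$ near $T$. Granting this, set $f(t)=u(R,t)$; then $e^{-f}f'\ge\delta$, so $-\tfrac{d}{dt}e^{-f}\ge\delta$, and integrating from $t$ to $T$ while using $f(t)\to\infty$ as $t\to T$ (the solution blows up on $\partial B_R$ by the remarks in Section 2) gives $e^{-f(t)}\ge\delta(T-t)$, i.e.\ $e^{u(R,t)}\le\frac{1}{\delta}(T-t)^{-1}$. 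Taking logarithms yields $u(R,t)\le\log C-\log(T-t)$ with $C=1/\delta$, and the same argument applies to $v$.

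To manufacture the pointwise lower bound I would differentiate the equations in $t$ and use the coupled auxiliary functions
\[
J=u_t-\delta e^{v},\qquad K=v_t-\delta e^{u}.
\]
Differentiating $u_t=\Delta u+\lambda e^{v}$ in $t$ gives $(u_t)_t-\Delta u_t=\lambda e^{v}v_t$, while $(\delta e^{v})_t-\Delta(\delta e^{v})=\delta\big(\lambda e^{u+v}-e^{v}|\nabla v|^2\big)$ after substituting $\Delta v=v_t-\lambda e^{u}$. Subtracting, the cross terms combine into
\[
J_t-\Delta J-\lambda e^{v}K=\delta e^{v}|\nabla v|^2\ge 0,
\]
and symmetrically $K_t-\Delta K-\lambda e^{u}J=\delta e^{u}|\nabla u|^2\ge 0$. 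This is a cooperative system (the off-diagonal coefficients $\lambda e^{v},\lambda e^{u}$ are nonnegative and bounded on every slab $\overline B_R\times[0,T']$ with $T'<T$) with nonnegative sources. Differentiating the boundary conditions $\partial u/\partial\eta=e^{v}$, $\partial v/\partial\eta=e^{u}$ in $t$ and using them again gives the cooperatively coupled Robin conditions $\partial J/\partial\eta=e^{v}K$ and $\partial K/\partial\eta=e^{u}J$ on $\partial B_R$. With $J(\cdot,0),K(\cdot,0)\ge 0$, the same maximum-principle argument used for $J_1,J_2$ in the proof of Theorem \ref{Bc} then forces $J,K\ge 0$ throughout, i.e.\ $u_t\ge\delta e^{v}$ and $v_t\ge\delta e^{u}$. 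Restricting to $r=R$ and invoking Lemma \ref{askt} ($e^{v}\ge M^{-1}e^{u}$) converts this into $u_t(R,t)\ge(\delta/M)e^{u(R,t)}$, exactly the ODE inequality needed in the first step.

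I expect two points to require care. First, the initial sign $J(\cdot,0),K(\cdot,0)\ge 0$ demands $\Delta u_0+\lambda e^{v_0}\ge\delta e^{v_0}$; the compatibility conditions (\ref{B23}) secure $\Delta u_0+\lambda e^{v_0}\ge 0$ but not the strict margin, so I would instead run the argument on $[t_0,T)$ for a small $t_0>0$, where Lemma \ref{Cv}(iii) gives $u_t,v_t\ge c_0>0$ on the compact slab and $\delta$ may be chosen so small that $\delta e^{v}\le c_0\le u_t$ there; since the conclusion is asymptotic as $t\to T$ this loses nothing. Second, and this is the genuine obstacle, the interior and boundary inequalities close with matching signs precisely when $\lambda_1=\lambda_2=\lambda$ (as in Theorem \ref{Bc}); for unequal reaction coefficients one must take $J=u_t-\delta_1 e^{v}$, $K=v_t-\delta_2 e^{u}$, and the leftover terms are multiples of $e^{u+v}$ that cannot be given a fixed favourable sign both inside and on the boundary simultaneously. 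Controlling these cross terms—by calibrating $\delta_1,\delta_2$ against $\lambda_1,\lambda_2$ and absorbing the remainder through the comparability $M^{-1}e^{u}\le e^{v}\le Me^{u}$ of Lemma \ref{askt}—is where the main work lies.
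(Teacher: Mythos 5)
Your route is genuinely different from the paper's: the paper proves Theorem \ref{Bcc} by the representation-formula method of Hu and Yin \cite{22} --- writing $u$ via the fundamental solution $\Gamma$, letting $x\to\partial B_R$, using the jump relation and the estimates of \cite{23} on $\partial\Gamma/\partial\eta_y$ to obtain an integral inequality for $M(t)=\max_{\overline B_R}u$, and then closing the argument with Lemma \ref{askt}. You instead aim at a Friedman--McLeod-type pointwise inequality $u_t\ge\delta e^{v}$, $v_t\ge\delta e^{u}$, to be integrated at $r=R$. In the case $\lambda_1=\lambda_2=\lambda$ your computations are correct: the interior identity $J_t-\Delta J-\lambda e^{v}K=\delta e^{v}|\nabla v|^2$ and its twin, the coupled Robin conditions $\partial J/\partial\eta=e^{v}K$, $\partial K/\partial\eta=e^{u}J$, the replacement of the initial data by a slab $\{t=t_0\}$ via Lemma \ref{Cv}(iii), and the final integration all hold up. (One caveat: the maximum principle you need is not ``the same'' as for $J_1,J_2$ in Theorem \ref{Bc}, since those vanish identically on $\partial B_R$; with boundary coupling you need a Hopf-lemma/first-touching argument, as in \cite{7}, \cite{18} --- standard, but a distinct ingredient you should not elide.)

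The genuine gap is the one you flagged and then left open: Theorem \ref{Bcc}, unlike Theorem \ref{Bc}, assumes nothing about $\lambda_1,\lambda_2$ beyond positivity, and your argument is structurally confined to $\lambda_1=\lambda_2$. With $J=u_t-\delta_1e^{v}$, $K=v_t-\delta_2e^{u}$ one finds $J_t-\Delta J-\lambda_1e^{v}K=(\lambda_1\delta_2-\lambda_2\delta_1)e^{u+v}+\delta_1e^{v}|\nabla v|^2$ and $K_t-\Delta K-\lambda_2e^{u}J=(\lambda_2\delta_1-\lambda_1\delta_2)e^{u+v}+\delta_2e^{u}|\nabla u|^2$; since the gradient terms vanish at $r=0$, nonnegativity of both sources forces $\delta_1/\delta_2=\lambda_1/\lambda_2$. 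On the boundary, $\partial J/\partial\eta-e^{v}K=(\delta_2-\delta_1)e^{u+v}$ and $\partial K/\partial\eta-e^{u}J=(\delta_1-\delta_2)e^{u+v}$, and the Hopf step needs both to be nonnegative, i.e.\ $\delta_1=\delta_2$. The two constraints are compatible only when $\lambda_1=\lambda_2$. Lemma \ref{askt} cannot rescue this: the bad cross term is itself of order $e^{u+v}$, the same order as the terms you would absorb it into, and the comparability constant $M$ there is not at your disposal. So the proposal, as written, proves the theorem only in the special case $\lambda_1=\lambda_2$, not as stated. This is precisely where the paper's integral-equation approach wins: the reaction terms $\lambda_i\int\!\!\int\Gamma e^{v}\,dy\,d\tau$ enter the representation formula with a favourable sign and are simply discarded, so the values of $\lambda_1,\lambda_2$ never enter the estimate at all.
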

\begin{proof}
Define $$M(t)=\max_{\overline{B}_R}u(x,t),\quad N(t)=\max_{\overline{B}_R}v(x,t).$$
$M(t),N(t)$ are increasing in $(0,T)$ due to the $$u_t ,v_t> 0,\quad (x,t) \in \overline B_R\times (0,T).$$ For $0< z<t<T, x \in B_R,$  as in \cite{22}, the integral equation for problem (\ref{B20}) with respect to $u$ can be written as follows 
\begin{eqnarray*} u(x,t)&=&\int_{B_R} \Gamma(x-y,t-z)u(y,z)dy+\lambda_1 \int_z^t \int_{B_R} \Gamma(x-y,t-\tau)e^{v(y,\tau)}dy d\tau\\ &&+\int_z^t \int_{S_R} \Gamma(x-y,t-\tau)e^{v(y,\tau)}ds_y d\tau\\ &&-\int_z^t \int_{S_R} u(y,\tau)\frac{\partial \Gamma}{\partial \eta_y}(x-y,t-\tau)ds_y d\tau,\end{eqnarray*}  
where $\Gamma$ is the fundamental solution of the heat equation, which takes the form:
 \begin{equation}\label{op}\Gamma(x,t)=\frac{1}{(4\pi t)^{(n/2)}}\exp[-\frac {|x|^2}{4t}] .\end{equation} 
 Letting $x\rightarrow \partial B_R$ and using the jump relation, \cite{23}, for the fourth term on the right hand side of the last equation, we obtain 
\begin{eqnarray*} \frac{1}{2}u(x,t)&=&\int_{B_R} \Gamma(x-y,t-z)u(y,z)dy+\lambda_1\int_z^t \int_{B_R} \Gamma(x-y,t-\tau)e^{v(y,\tau)}dy d\tau\\ &&+\int_z^t \int_{S_R} \Gamma(x-y,t-\tau)e^{v(y,\tau)}ds_y d\tau\\ &&-\int_z^t \int_{S_R} u(y,\tau)\frac{\partial \Gamma}{\partial \eta_y}(x-y,t-\tau)ds_y d\tau,\end{eqnarray*}  for $x\in \partial B_R, 0<z<t<T.$

Since $u,v$ are positive and radial, it follows  \begin{eqnarray*} && \int_{B_R} \Gamma (x-y,t-z)u(y,z)dy >0,\\ && \int_z^t \int_{S_R}e^{v(y,\tau)}
\Gamma (x-y,t-\tau)d{s_y}d_\tau=\int_z^t e^{v(R,\tau)}[\int_{S_R}\Gamma (x-y,t-\tau)ds_y]d\tau.\end{eqnarray*}  Thus
 \begin{eqnarray*} \frac{1}{2}M(t) &\ge& \int_z^t e^{N(\tau)}[\int_{S_R}\Gamma (x-y,t-\tau)ds_y]d\tau \\
&&-\int_z^t M(\tau)[\int_{S_R} | \frac{\partial \Gamma}{\partial \eta _y} (x-y,t-\tau )| ds_y  ] d \tau, \quad x\in S_R, 0<z<t<T. \end{eqnarray*}
 It is known that (see \cite{23}) for  $0<t_2<t_2,$ these is $C^*>0$ such that 
 $$|\frac{\partial \Gamma}{\partial \eta_{y}}(x-y,t_2-t_1)|\le \frac{C^*}{(t_2-t_1)^\mu}\cdot \frac{1}{|x-y|^{(n+1-2\mu- \sigma ) }},\quad x,y \in S_R, \sigma\in (0,1).$$
Choose $1-\frac{\sigma}{2} < \mu <1,$ from \cite{23}, there exist $C_1>0$ such that
$$\int_{S_R}\frac{ds_y}{|x-y|^{(n+1-2\mu- \sigma ) }} <C_1.$$ Also, if $t_1$ close to $t_2,$ then there exist a constant $c$ such that  $$\int_{S_R} \Gamma (x-y,t_2-t_1)ds_y \ge \frac{c}{\sqrt{t_2-t_1}}.$$
  Thus  $$\frac{1}{2} M(t) \ge c \int_z^t \frac{e^{N(\tau)}}{\sqrt{t-\tau}}d\tau-C\int_z^t \frac{M(\tau)}{|t-\tau|^{\mu}}d \tau.$$
Since for $0<z<\tau< t <T,$ it is clear that $M(\tau) \le M(t),$ thus
\begin{equation}\label{d8}
\frac{1}{2}M(t)\ge c \int_z^t \frac{e^{N(\tau)}}{\sqrt{T-\tau}}d\tau-C^*_1 M(t)|T-z|^{1-\mu}.
\end{equation}
Taking $z$ so that  $C^*_1|T-z|^{1-\mu}=1/2,$ it follows
\begin{equation}\label{d10}
M(t)\ge c \int_z^t \frac{e^{N(\tau)}}{\sqrt{T-\tau}}d\tau \equiv A(t). \end{equation}
Clearly, $$A^{'}(t)=c\frac{e^{N(t)}}{\sqrt{T-t}}.$$ From Lemma \ref{askt}, there exist a constant  $k>1$ such that the last equation becomes
$$A^{'}(t)=\frac{c}{k}\frac{e^{M(t)}}{\sqrt{T-t}}\ge \frac{c}{k}\frac{e^{A(t)}}{\sqrt{T-t}},$$
which leads to $$\int_t^T \frac{dA}{e^{A}}\ge \int_t^T \frac{c}{k} \frac{d\tau}{\sqrt{T-\tau}}.$$
Clearly, $$A(T)=\lim_{t\rightarrow T}c \int_z^t \frac{e^{N(\tau)}}{\sqrt{T-\tau}}d\tau=c \int_z^t \lim_{\tau\rightarrow T}\frac{e^{N(\tau)}}{\sqrt{T-\tau}}d\tau=\infty.$$  
This leads to 
$$\frac{1}{e^{A(t)}}\ge \frac{2c}{k} \sqrt{T-t}.$$ Therefore, there exist a constant $C_0>0$ such that 
\begin{equation}\label{RT}e^{A(t)}\le \frac{C_0}{ \sqrt{T-t}}, \quad z<t<T.\end{equation}
On the other hand, for $t_0=2t-T$ (Assuming that $t$ is close to $T$),
$$A(t)\ge c\int_{t_0}^t \frac{e^{N(\tau)}}{\sqrt{T-\tau}}d\tau\ge c e^{N(t_0)} \int_{2t-T}^{t} \frac{1}{\sqrt{T-\tau}}d\tau=e^{N(t_0)}2c(\sqrt{2}-1)\sqrt{T-t} .$$
Combining the last inequality with (\ref{RT}), yields
$$\frac{C_0}{ \sqrt{T-t}}\ge e^{N(t_0)}2c(\sqrt{2}-1)\sqrt{T-t},$$ which leads to 
$$e^{N(t_0)}\le \frac{C_0}{c(\sqrt{2}-1)(T-t_0)}.$$ Thus there exist a constant $C$ such that 
 $$e^{N(t)}\le  \frac{C}{(T-t)},\quad 0 < t < T$$ or $$v(R,t) \le \log{C}-\log{(T-t)},\quad t \in (0,T).$$ In the same way we can show 
$$u(R,t) \le \log{C}-\log{(T-t)},\quad t \in (0,T).$$ 
 \end{proof}
\begin{remark} From Theorems \ref{Bc}, \ref{Bcc}, we conclude that, the upper blow-up rate estimates of problem (\ref{B20}) are coincident with the upper blow-up rate estimates of the zero Dirichlet problem for the semilinear system in (\ref{B20}), while the lower blow-up rate estimates of problems (\ref{B20}) are coincident with the lower blow-up rate estimates of problem (\ref{B20}), where $\lambda_1=\lambda_2=0$ (see \cite{7}).  
\end{remark}

 \section{Blow-up Set} We consider next the blow-up set for problem (\ref{B20}), under some restricted assumptions on $\lambda_1,\lambda_2.$
\begin{theorem}\label{}
Let  $(u,v)$ be a blow-up solution to problem (\ref{B20}). Assume that the following condition is satisfied 
\begin{equation}\label{FF} \lambda[4R^2(n+1)+1]\le \min \left\{ \frac{1}{C}, \frac{4(n+1)}{[R^2+4(n+1)T]}e^{-||u_0||_\infty},\frac{4(n+1)}{[R^2+4(n+1)T]}e^{-||v_0||_\infty} \right\}, \end{equation}
where $T$ is the blow-up time, $C$ is given in Theorem \ref{Bcc}, $\lambda=\max\{\lambda_1,\lambda_2\}.$ 
Then there exist a positive constant $A$ such that 
$$u(x,t) \le \log [\frac{1}{A(R^2-r^2)^2}],\quad v(x,t) \le \log [\frac{1}{A(R^2-r^2)^2}], $$ for $(x,t) \in B_R\times (0,T),$  \end{theorem}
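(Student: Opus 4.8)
The plan is to derive the estimate from the comparison principle used throughout the paper (the Proposition of \cite{21}), by exhibiting an explicit radial supersolution pair $(\bar u,\bar v)$ for problem (\ref{B20}) that already incorporates the boundary blow-up rate of Theorem \ref{Bcc}. Guided by the shape of the desired bound and by $e^{u(R,t)},e^{v(R,t)}\le C/(T-t)$, I would set
$$\Psi(r,t)=R^2-r^2+4(n+1)(T-t),\qquad \bar u(x,t)=\bar v(x,t)=\log\frac{1}{A\,\Psi(r,t)^{2}},$$
with $A>0$ to be fixed. The role of the term $4(n+1)(T-t)$ is that $\Psi>0$ on $B_R\times(0,T)$ while $\Psi\to R^2-r^2$ as $t\to T$; since $\Psi$ decreases in $t$, one has $\bar u(x,t)\le\log[1/(A(R^2-r^2)^2)]$ for every $t<T$, so proving $u\le\bar u$ and $v\le\bar v$ on $B_R\times(0,T)$ immediately yields the theorem.

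Comparison requires three inequalities, and condition (\ref{FF}) is built to supply them. For the initial ordering I would use $\Psi(r,0)\le R^2+4(n+1)T$ to reduce $\bar u(x,0)\ge u_0$ and $\bar v(x,0)\ge v_0$ to bounds governed by the entries $\frac{4(n+1)}{R^2+4(n+1)T}e^{-\|u_0\|_\infty}$ and $\frac{4(n+1)}{R^2+4(n+1)T}e^{-\|v_0\|_\infty}$ of the minimum in (\ref{FF}), once $A$ is chosen. For the boundary I would not compare with $e^{\bar v}$ directly; instead, since $u,v$ are radial and nondecreasing in $r$, the genuine flux on $\partial B_R$ satisfies $\partial_\eta u=e^{v(R,t)}\le C/(T-t)$ by Theorem \ref{Bcc}, so it suffices that the supersolution flux dominate $C/(T-t)$. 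A direct computation gives $\bar u_r(R,t)=4R/\Psi(R,t)=R/[(n+1)(T-t)]$, so the boundary requirement reduces to an inequality controlled by the $1/C$ entry of the minimum in (\ref{FF}).

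The interior inequality is the substantive one. Writing $\bar u_t=8(n+1)/\Psi$, $\Delta\bar u=4n/\Psi+8r^2/\Psi^{2}$ and $e^{\bar v}=1/(A\Psi^2)$, the requirement $\bar u_t-\Delta\bar u-\lambda_1 e^{\bar v}\ge0$ becomes, after multiplying by $\Psi^2>0$,
$$4(n+2)\,\Psi-8r^{2}-\frac{\lambda_1}{A}\ge 0\qquad\text{on }B_R\times(0,T).$$
Keeping the favourable term $4(n+1)(T-t)$ inside $\Psi$ and writing $\lambda=\max\{\lambda_1,\lambda_2\}$, this is where the factor $\lambda[4R^2(n+1)+1]$ on the left of (\ref{FF}) enters: the worst value $8R^2$ of $8r^2$ together with the reaction contribution $\lambda_1/A$ must be absorbed by $4(n+2)\Psi$, which forces the smallness of $\lambda$ against a suitable choice of $A$. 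The computation for $\bar v$ is identical with $\lambda_1,\lambda_2$ and $u_0,v_0$ interchanged.

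The main obstacle is precisely this interior inequality in the limit $(r,t)\to(R,T)$, where $\Psi\to0$ so that the favourable first-order term $4(n+2)\Psi$ degenerates while the sign-indefinite curvature term $8r^2/\Psi^2$ and the reaction $\lambda_1/(A\Psi^2)$ remain of the same (highest) order. This corner is where the full strength of (\ref{FF}) must be spent, and I expect establishing the inequality uniformly up to $(R,T)$ to be the delicate step: it may require refining the time-weight $4(n+1)(T-t)$ (for instance replacing it by a weight decaying more slowly than linearly near $t=T$) or performing the comparison on $B_R\times(t_0,T)$ and patching with the a priori rate of Theorem \ref{Bcc} near the boundary. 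Once the three inequalities are secured, the comparison principle gives $u\le\bar u$ and $v\le\bar v$ on $B_R\times(0,T)$, and letting the weight collapse as $t\to T$ produces the stated estimates $u,v\le\log[1/(A(R^2-r^2)^2)]$.
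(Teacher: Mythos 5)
Your overall strategy (an explicit radial barrier plus comparison, calibrated by Theorem \ref{Bcc} and condition (\ref{FF})) is the same as the paper's, but your barrier does not work, and the obstacle you yourself flag is fatal rather than a technicality. With $\Psi=R^2-r^2+4(n+1)(T-t)$ and $\bar u=\log\frac{1}{A\Psi^2}$, your interior requirement is, as you computed, $4(n+2)\Psi-8r^2-\lambda_1/A\ge 0$; near the corner $(r,t)\to(R,T)$ this tends to $-8R^2-\lambda_1/A<0$, so $\bar u$ is \emph{not} a supersolution on any full neighborhood of the corner, for \emph{any} choice of $A$ and of the linear-in-time weight. The failure is structural: the negative terms $-8r^2/\Psi^2$ and $-\lambda_1 e^{\bar v}=-\lambda_1/(A\Psi^2)$ are of order $1/\Psi^2$, while the only positive term $\bar u_t$ is of order $1/\Psi$, so no smallness condition on $\lambda$ can rescue the inequality where $\Psi\to 0$. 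Your proposed patches (a slower time weight, or comparison on $B_R\times(t_0,T)$ glued to Theorem \ref{Bcc}) do not resolve this, since Theorem \ref{Bcc} controls $u,v$ only on $\partial B_R$, not in an interior neighborhood of the corner where your differential inequality breaks.

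The paper's fix is precisely a different placement of the time term: it takes $z_1=z_2=\log\frac{1}{W}$ with $W=A(R^2-r^2)^2+B(T-t)$, i.e.\ the time weight is added \emph{outside} the square. Then the dangerous gradient contribution is $A^2\bigl|\nabla (R^2-r^2)^2\bigr|^2/W^2=16A^2r^2(R^2-r^2)^2/W^2\le 16Ar^2/W$, because $(R^2-r^2)^2\le W/A$; so all terms in $z_{1t}-\Delta z_1-\lambda_1e^{z_2}$ are of order $1/W$ and the inequality holds on all of $B_R\times(0,T)$ once $B\ge A[4R^2(n+1)+1]$, which is where the left side of (\ref{FF}) comes from. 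Your boundary treatment is also shakier than needed: the flux comparison $\bar u_r(R,t)=R/[(n+1)(T-t)]\ge C/(T-t)$ demands $C\le R/(n+1)$, a condition on the fixed constant $C$ of Theorem \ref{Bcc} that is not implied by (\ref{FF}) and cannot be arranged by choosing $A$, since $\bar u_r(R,t)$ does not depend on $A$. The paper avoids this entirely by a Dirichlet-type comparison on the parabolic boundary: by Theorem \ref{Bcc}, $u(R,t)\le\log\frac{C}{T-t}\le z_1(R,t)=\log\frac{1}{B(T-t)}$ as soon as $B\le 1/C$, and the ordering at $t=0$ uses the remaining entries of the minimum in (\ref{FF}); then the comparison principle applies with no reference to the Neumann data, and the conclusion follows from $W\ge A(R^2-r^2)^2$.
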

 \begin{proof} 
 Define the functions $z_1,z_2$ as follows 
 \begin{equation}\label{Dz}\begin{array}{ll} z_1(x,t)=z_2(x,t)=\log \frac{1}{[Av(x)+B(T-t)]},&\quad(x,t)\in \overline{B}_R \times (0,T),\\

 \end{array} \end{equation}
 
  where $v(x)=(R^2-r^2)^2,\quad r=|x|,$ ~$B>0,$~ $A\ge \lambda.$
  
    A direct calculation shows: 
  \begin{equation}\label{Dv} \left. \begin{array}{ll}  z_{1t}-\Delta z_1-\lambda_1 e^{z_2}\ge z_{1t}-\Delta z_1-Ae^{z_2}\ge 0,& \mbox{in}~ {B}_R \times (0,T),\\ 
   z_{2t}-\Delta z_2-\lambda_2 e^{z_1} \ge z_{2t}-\Delta z_2-Ae^{z_1} \ge 0,& \mbox{in}~{B}_R \times (0,T) \end{array} \right\}  \end{equation} 
  provided $$B\ge A[4R^2(n+1)+1].$$ 
  Moreover, 
\begin{equation}\label{V1} \left. \begin{array}{lll} z_1(x,0)=\log \frac{1}{[Av(x)+BT]}\ge \log \frac{1}{[AR^4+BT]}\ge u(x,0) ,& x \in {B}_R, \\
 z_2(x,0)=\log \frac{1}{[Av(x)+BT]} \ge \log \frac{1}{[AR^4+BT]}\ge v(x,0) ,& x \in {B}_R \end{array} \right\} \end{equation}
 and  
 \begin{equation}\label{V2} \begin{array}{lll} 
z_1(R,t)=z_2(R,t)=\log \frac{1}{B(T-t)}\ge \log \frac{C}{(T-t)},& t \in (0,T) \\

\end{array} \end{equation}  provided 
$$B\le \min \left\{ \frac{1}{C}, \frac{4(n+1)}{R^2+4(n+1)T}e^{-||u_0||_\infty},\frac{4(n+1)}{R^2+4(n+1)T}e^{-||v_0||_\infty} \right\},$$

From (\ref{V1}), (\ref{V2}) and Theorem \ref{Bcc}, it follows that  
\begin{equation}\label{V3} \left. \begin{array}{lll} z_1(R,t)\ge u(R,t),& z_2(R,t) \ge v(R,t),& t \in (0,T),\\
 z_1(x,0) \ge u(x,0), & z_2(x,0) \ge v(x,0), & x\in {B}_R. \end{array} \right \} \end{equation} 
 
From  (\ref{Dv}), (\ref{V3}) and the comparison principle \cite{2}, it follows that $$z_1(x,t)\ge u(x,t), \quad z_2(x,t)\ge v(x,t),\quad (x,t)\in B_R \times (0,T).$$  Moreover, from (\ref{Dz})
\begin{equation}\label{yn} u(x,t) \le \log [\frac{1}{A(R^2-r^2)^2}],\quad v(x,t) \le \log [\frac{1}{A(R^2-r^2)^2}],\end{equation} for $(x,t) \in B_R\times (0,T).$  
\end{proof}
\begin{remark} From (\ref{yn}), we conclude that, for problem (\ref{B20}) with (\ref{FF}), any point  $x\in B_R$ cannot be a blow-up point, therefore, the blow-up occurs only at the boundary. This means, if $\lambda_1,\lambda_2$ are small enough, then the blow-up set  is the same as that of (\ref{B20}), where $\lambda_1=\lambda_2=0$ (see \cite{18}).
\end{remark}

\end{document}